\documentclass[article, onesided, a4paper]{amsart}

\usepackage[utf8]{inputenc}

\usepackage{amsmath}
\usepackage{amssymb}

\usepackage{bm}

\usepackage{physics}

\usepackage{xurl}

\usepackage{imakeidx}

\usepackage{hyperref}

\let\oldproof\proof
\def\proof{\oldproof\unskip}

%Spacing Commands

% \onehalfspacing

%Chapter/Section Titles

% \usepackage[T1]{fontenc}
% \usepackage{titlesec}
% \usepackage[usenames,dvipsnames]{xcolor}
% \definecolor{gray75}{gray}{0.75}
% \newcommand{\hsp}{\hspace{20pt}}
% \titleformat{\chapter}[hang]{\LARGE\bfseries}{\thechapter\hsp\textcolor{gray75}{|}\hsp}{0pt}{\LARGE\bfseries}
% \titlespacing*{\chapter}{0pt}{0pt}{1cm}

% Font

%\usepackage[bitstream-charter]{mathdesign}

\usepackage{etoolbox}

\makeatletter

\patchcmd\deferred@thm@head
  {\addvspace{-\parskip}}
  {}
  {}{\typeout{\string\deferred@thm@head patch failed!}}

\makeatletter 

\usepackage{mathtools}
\mathtoolsset{showonlyrefs}
\usepackage{enumitem}
\usepackage{amsthm}

%Theorem Styles

\usepackage{tikz-cd}

\newtheorem{thm}[subsection]{Theorem}
\newtheorem{lem}[subsection]{Lemma}
\newtheorem{prop}[subsection]{Proposition}
\newtheorem{cor}[subsection]{Corollary}
\newtheorem{fact}[subsection]{Fact}

\newtheorem{claim_num}[subsection]{Claim}

\theoremstyle{remark}
\newtheorem*{rmk}{Remark}

\theoremstyle{definition}
\newtheorem{defn}[subsection]{Definition}

%Various Operators

\DeclareMathOperator{\rk}{rk}

%Bold Symbols

\newcommand{\bdelta}{\bm{\delta}}
\newcommand{\bpartial}{\bm{\partial}}

\newcommand{\DY}{\mathcal{D}Y}

\renewcommand{\leq}{\leqslant}

\renewcommand{\epsilon}{\varepsilon}
\renewcommand{\O}{\mathcal{O}}
\renewcommand{\phi}{\varphi}

%Symbols

%Languages

%Theories

\newcommand{\N}{\mathbb{N}}

\newcommand{\p}{\mathfrak{p}}

%Categories and Functors

%Independence Relation Symbols

\def\Ind#1#2{#1\setbox0=\hbox{$#1x$}\kern\wd0\hbox to 0pt{\hss$#1\mid$\hss}
\lower.9\ht0\hbox to 0pt{\hss$#1\smile$\hss}\kern\wd0}

\def\notind#1#2{#1\setbox0=\hbox{$#1x$}\kern\wd0
\hbox to 0pt{\mathchardef\nn=12854\hss$#1\nn$\kern1.4\wd0\hss}
\hbox to 0pt{\hss$#1\mid$\hss}\lower.9\ht0 \hbox to 0pt{\hss$#1\smile$\hss}\kern\wd0}

%Mathcal small o

\usepackage{graphicx}

%Section Numbering

\title{Tressl's Structure Theorem for Separable Algebras}
\author{Gabriel Ng}
\date{10th February 2025}
\subjclass{Primary: 12H05, 13N99}
\keywords{differential algebras, structure theorem, differentially large fields, separable algebras}
\thanks{This project has received funding from the European Union’s Horizon 2020 research and innovation programme under the Marie Skłodowska Curie grant agreement No 101034383.}

\begin{document}

\begin{abstract}
    This short paper presents a generalisation of Tressl's structure theorem for differentially finitely generated algebras over differential rings of characteristic 0 to the case of separable algebras over differential rings of arbitrary characteristic.
\end{abstract}

\maketitle
\section{Introduction}

Tressl's structure theorem for differential algebras \cite[Theorem 1]{Tressl2002} is a result which describes the structure of differentially finitely generated algebras over differential rings of characteristic 0. It has been applied by Le\'on Sanchez and Tressl in the study of differentially large fields of characteristic 0 in order to characterise these fields in terms of the existence of points of differential algebras of a certain form \cite{SanchezTressl2020}.

In this paper, we adapt Tressl's proof to generalise this theorem to the case of separable algebras over differential rings of arbitrary characteristic. This generalised theorem is intended to play an analogous role in the study of differentially large fields in characteristic $p > 0$, for example in the development of a uniform companion for large differential fields in positive characteristic (cf. \cite{Tressl2005}), and various algebraic and geometric characterisations of differentially large fields in positive characteristic as in \cite{SanchezTressl2020} and \cite{SanchezTressl2023}.

\section{Preliminaries and Characteristic Sets}

We assume throughout this paper that all rings are commutative and unital, and of arbitrary characteristic. Recall that a \textbf{derivation} on a ring $R$ is an additive map $\delta: R \to R$ which satisfies the product rule $\delta(ab) = \delta(a)b + a\delta(b)$ for all $a, b \in R$. For the purposes of this paper, a \textbf{differential ring} is a ring $R$ equipped with an $m$-tuple $\bdelta = (\delta_0,...,\delta_{m-1})$ of \emph{commuting} derivations, i.e. for any $i \leq j < m$, $\delta_i\circ\delta_j = \delta_j \circ\delta_i$. For this section, we let $(R, \bdelta)$ be a differential ring.

We fix the following notation and objects:
\begin{itemize}
    \item $Y = (Y_0,...,Y_{n-1})$ is a tuple of $n$ indeterminates;
    \item $\mathcal{D} = \{\bdelta^\alpha : \alpha \in \N^m\}$ is the free abelian monoid of differential operators generated by $\bdelta$.
    \item $\mathcal{D}Y = \{\theta Y_i : \delta \in \mathcal{D}, i < n\}$ is the set of derivatives of variables from $Y$;
    \item $\mathcal{D}Y^* = \{(\theta Y_i)^p : p \in \N_{>0}, \theta Y_i \in \DY\}$ is the set of powers of elements from $\mathcal{D}Y$;
    \item $R\{Y\}$ is the \textbf{differential polynomial ring} in indeterminates $Y$, identified with $R[\mathcal{D}Y]$ as an $R$-algebra and equipped with the derivations $\bdelta$ extending those on $R$ and satisfying $\delta_i(\theta y_j) = (\delta_i\theta) y_j$ for all $\theta y_j \in \DY$ and $i < m$.
\end{itemize}

\begin{defn}
    We define the \textbf{rank} on $\mathcal{D}Y^*$ to be the map
    \begin{align}
        \rk: \mathcal{D}Y^* &\to \N \times n \times \N^m \times \N_{>0} \\
        (\bdelta^\alpha Y_i)^p &\mapsto (|\alpha|, i, \alpha_{m-1},...,\alpha_0, p).
    \end{align}
    We equip $\O = \N \times n \times \N^m \times \N_{>0}$ with the lexicographic order, which is well ordered.
\end{defn}

\begin{defn}
    For a differential polynomial $f \in R\{Y\} \setminus R$, say that a variable $y \in \DY$ \textbf{appears} in $f$ if $y$ appears in $f$ considered as an algebraic polynomial in $R[\DY]$. The \textbf{leader of $f$}, denoted $u_f$, is the variable $y \in \DY$ of maximal rank which appears in $f$. Denote by $u_f^*$ the highest power of $u_f$ which appears in $f$. We extend the notion of rank from $\DY^*$ to $R\{Y\}$ by setting
    \[
    \rk(f) = \rk(u_f^*) \in \O.
    \]
\end{defn}

\begin{defn}
    Let $f, g \in R\{Y\}$ be differential polynomials, with $g \not\in R$. We say that \textbf{$f$ is partially reduced with respect to $g$} (also called \textbf{weakly reduced}), if no proper derivative of $u_g$ appears in $f$. Say that \textbf{$f$ is reduced with respect to $g$} if $f$ is partially reduced with respect to $g$, and $\deg_{u_g}(f) < \deg_{u_g}(g)$.

    We say that $f$ is \textbf{(partially) reduced with respect to $G$}, where $G \subseteq R\{Y\}\setminus R$, if $f$ is (partially) reduced with respect to every $g \in G$.

    A nonempty subset $G \subseteq R\{Y\}$ is \textbf{autoreduced} if every $f \in G$ is reduced with respect to every $g \neq f \in G$ (including the case where $G$ is empty).
\end{defn}

It is easy to see that if $G$ is autoreduced, then for any distinct $f, g \in G$, $u_f \neq u_g$.

\begin{prop}
    Every autoreduced set is finite. 
\end{prop}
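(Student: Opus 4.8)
The plan is to reduce the statement to Dickson's Lemma by showing that the leaders of an autoreduced set form an antichain in a suitable partial order on $\DY$. First I would equip $\DY$ with the \emph{derivation order}, in which $\bdelta^\alpha Y_i \preceq \bdelta^\beta Y_j$ precisely when $i = j$ and $\alpha \leq \beta$ coordinatewise; equivalently, $\theta Y_i \preceq \theta' Y_j$ iff the two agree or $\theta' Y_j$ is a derivative of $\theta Y_i$. Under this order $\DY$ decomposes as the disjoint union of the $n$ components $\{\bdelta^\alpha Y_i : \alpha \in \N^m\}$, each isomorphic to $(\N^m, \leq)$, with elements arising from distinct variables being incomparable.

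Next I would argue that, for an autoreduced set $G$, the assignment $f \mapsto u_f$ injects $G$ into $\DY$ with image an antichain. Injectivity is exactly the remark preceding the proposition, that distinct elements of $G$ have distinct leaders. For the antichain property, take distinct $f, g \in G$ and suppose toward a contradiction that $u_f$ and $u_g$ are comparable. Since $u_f \neq u_g$, one is a proper derivative of the other; say $u_g$ is a proper derivative of $u_f$. But $u_g$ appears in $g$, contradicting that $g$ is partially reduced with respect to $f$. The reverse case is symmetric, using that $f$ is partially reduced with respect to $g$, so the leaders are pairwise incomparable.

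Finally I would invoke Dickson's Lemma, which states that $(\N^m, \leq)$ contains no infinite antichain. An antichain in $\DY$ meets each of the $n$ components in an antichain of that component, and is therefore a finite union of finite sets. Hence $\{u_f : f \in G\}$ is finite, and by injectivity of $f \mapsto u_f$ the set $G$ is finite.

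The appeal to Dickson's Lemma is standard, so the only genuine content lies in the antichain computation and in the correct bookkeeping of the order on $\DY$. The point to be careful about is that the relevant order here is the product (divisibility) order on each component, \emph{not} the lexicographic order used to define $\rk$: the latter is a well-order, which controls descending chains but does not by itself forbid infinite antichains, and so would not suffice for the finiteness conclusion.
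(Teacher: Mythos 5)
Your proof is correct and follows essentially the same route as the paper: both decompose the leaders according to which indeterminate $Y_i$ they derive from, observe that in an autoreduced set the leaders are distinct and none is a proper derivative of another, and then invoke the well-quasi-order property of $(\N^m,\leq)$ (your Dickson's Lemma is the antichain formulation of the Kolchin lemma the paper cites). The only difference is presentational: the paper argues by contradiction, extracting an infinite increasing chain of leaders, whereas you argue directly that the leaders form an antichain and hence a finite set.
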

\begin{proof}
    Suppose that $G$ is an infinite autoreduced set. For each $f \in G$, write $u_f = \delta_0^{\alpha_{0, f}}...\delta_{m-1}^{\alpha_{m-1, f}} Y_{i_f}$. Consider the set
    \[
    \{(\alpha_{0, f},...,\alpha_{m-1, f}, i_f): f \in G\} \subseteq \N^m \times n
    \]
    partially ordered by the product order. Then, by \cite[Chapter 0, Section 17, Lemma 15(a)]{Kolchin1973}, there exists an infinite increasing sequence of elements, where every term has the same projection onto $n$. This implies there are $f_0, f_1, f_2,...$ such that $u_{f_j}$ is a proper derivative of $u_{f_i}$ for all $i < j$, which contradicts the assumption that $G$ is autoreduced.
\end{proof}

\begin{defn}
    Let $\infty$ be an element larger than every element of $\O$, and equip $(\O \cup \{\infty\})^\N$ with the lexicographic order.  The \textbf{rank of an autoreduced set} $G$ is defined as follows: Let $G = \{g_0,...,g_{l-1}\}$ with $\rk(g_0) < ... < \rk(g_{l-1})$. Define
    \[
    \rk(G) = (\rk(g_0),... , \rk(g_{l-1}), \infty, \infty,...) \in \O.
    \]
\end{defn}

\begin{prop}[{\cite[Chapter I, Section 10, Proposition 3]{Kolchin1973}}]
    There is no infinite strictly rank-decreasing sequence of autoreduced sets.
\end{prop}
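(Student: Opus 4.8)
The plan is to argue by contradiction: suppose $(G^{(k)})_{k \in \N}$ is an infinite strictly rank-decreasing sequence of autoreduced sets, and extract from it a configuration of leaders that violates the combinatorics of $\DY$. The point to keep in mind throughout is that the lexicographic order on $(\O \cup \{\infty\})^\N$ is \emph{not} well-ordered, so well-foundedness cannot be quoted directly; the work lies in using the autoreduced structure to manufacture a well-behaved object out of the sequence.

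Write $\rk(G^{(k)}) = (r^{(k)}_0, r^{(k)}_1, \ldots)$, where $r^{(k)}_j = \rk(g^{(k)}_j)$ for the $(j+1)$-st element $g^{(k)}_j$ of $G^{(k)}$ listed in increasing rank, and $r^{(k)}_j = \infty$ once $G^{(k)}$ is exhausted. First I would stabilise the coordinates one at a time. Since the sequence is strictly lexicographically decreasing, the values $(r^{(k)}_0)_k$ are non-increasing in the well-ordered set $\O \cup \{\infty\}$, hence eventually constant, say equal to $r_0$ for all $k \geq N_0$. On this tail the first coordinate is frozen, so comparisons are decided from the second coordinate onwards; the same argument then yields a constant value $r_1$ for $k \geq N_1 \geq N_0$, and iterating gives values $r_0, r_1, r_2, \ldots \in \O \cup \{\infty\}$ together with thresholds $N_0 \leq N_1 \leq \cdots$.

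Now I would split into cases. If $r_j = \infty$ for some $j$, then every $G^{(k)}$ with $k \geq N_j$ consists of exactly $j$ elements of ranks $r_0, \ldots, r_{j-1}$, so all of them have rank $(r_0, \ldots, r_{j-1}, \infty, \infty, \ldots)$, contradicting strict decrease. Hence every $r_j$ lies in $\O$; and because the ranks inside each autoreduced set strictly increase, we get $r_0 < r_1 < r_2 < \cdots$. Each $r_j$ determines a leader $v_j = \bdelta^{\alpha_j} Y_{i_j} \in \DY$ with $(\alpha_j, i_j) \in \N^m \times n$. For any $i < j$, the polynomials $g^{(N_j)}_i$ and $g^{(N_j)}_j$ lie in the single autoreduced set $G^{(N_j)}$ (using $N_j \geq N_i$) and have leaders $v_i$ and $v_j$; autoreducedness forces $v_i \neq v_j$ and forbids $v_j$ from being a proper derivative of $v_i$.

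Finally, I would feed the sequence $((\alpha_j, i_j))_j \subseteq \N^m \times n$ into \cite[Chapter~0, Section~17, Lemma~15(a)]{Kolchin1973} to obtain indices $j_0 < j_1 < \cdots$ along which $i_{j_s}$ is constant and $\alpha_{j_s}$ is non-decreasing in the product order. Since the $v_{j_s}$ are pairwise distinct and share a variable, the multi-indices must in fact be strictly increasing, so $v_{j_1}$ is a proper derivative of $v_{j_0}$ with $j_0 < j_1$---precisely the situation excluded in the previous step. This contradiction proves the proposition. The genuinely delicate part is the stabilisation bookkeeping: one must check that freezing the earlier coordinates really does reduce each comparison to the later ones; once the infinite leader sequence is in hand, the contradiction is the same monoid combinatorics already used to prove that autoreduced sets are finite.
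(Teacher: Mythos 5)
Your proof is correct. Note, however, that the paper itself gives no argument for this proposition: it is quoted directly from Kolchin (Chapter I, Section 10, Proposition 3), so there is no in-paper proof to diverge from. What you have written is essentially the classical Ritt--Kolchin argument, and it fits the paper well: the coordinate-by-coordinate stabilisation is sound (each coordinate sequence is non-increasing in the well-ordered set $\O \cup \{\infty\}$, hence eventually constant, and freezing earlier coordinates does push each lexicographic comparison to later ones), the case $r_j = \infty$ correctly forces the tail of the sequence to have constant rank, and the final contradiction reuses exactly the same combinatorial input --- Kolchin's Lemma 15(a) of Chapter 0, Section 17, i.e.\ a Dickson's-lemma-type statement --- that the paper invokes to prove that every autoreduced set is finite. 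One small point worth making explicit: the rank of an element determines its leader (the tuple $(|\alpha|, i, \alpha_{m-1},\ldots,\alpha_0,p)$ encodes $i$ and $\alpha$), which is what licenses passing from the stabilised ranks $r_j$ to well-defined leaders $v_j$; and the pairwise distinctness of the $v_j$, which you need before applying the lemma, comes from viewing $v_i$ and $v_j$ as leaders of two distinct elements of the single autoreduced set $G^{(N_j)}$, exactly as you argue. So the proposal both fills in the omitted citation and does so by the same method the paper uses for the neighbouring finiteness result.
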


\begin{defn}[{cf. \cite[Chapter I, Section 10]{Kolchin1973}}]
    Let $\mathfrak{k} \subseteq R\{Y\}$ be a differential ideal not contained in $R$. Then, by the previous proposition,
    \[
    \{\rk(G) : G \subseteq M \text{ is autoreduced such that } S(g) \not\in \mathfrak{k} \text{ for all } g \in G\}
    \]
    has a minimum. We call an autoreduced subset $G$ of $M$ of minimal rank a \textbf{characteristic set of $\mathfrak{k}$}.
\end{defn}

\begin{lem} \label{lem_char_set_not_reduced}
    Let $G$ be a characteristic set of $M \subseteq R\{Y\}$, and $f \in M\setminus R$. Then, $f$ is not reduced with respect to $G$. 
\end{lem}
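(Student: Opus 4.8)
The plan is to argue by contradiction with the minimality of $\rk(G)$. Suppose some $f \in M \setminus R$ were reduced with respect to $G$; among all such polynomials fix one, $f$, of least possible rank. Writing $G = \{g_0, \dots, g_{l-1}\}$ with $\rk(g_0) < \dots < \rk(g_{l-1})$, let $k$ be the number of $g_i$ with $\rk(g_i) < \rk(f)$ and set
\[
G^* = \{g_0, \dots, g_{k-1}\} \cup \{f\}.
\]
I would then show that $G^*$ is an admissible autoreduced set, in the sense of the definition of a characteristic set, of rank strictly below $\rk(G)$, contradicting minimality. Note first that $\rk(f) \neq \rk(g_i)$ for every $i$: a rank tuple determines both its leader and the exponent of that leader, so $\rk(f) = \rk(g_i)$ would force $u_f = u_{g_i}$ and $\deg_{u_f}(f) = \deg_{u_{g_i}}(g_i)$, contradicting that $f$ is reduced with respect to $g_i$. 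Thus the splitting of $G$ at $\rk(f)$ is unambiguous, and $g_k, \dots, g_{l-1}$ all have rank strictly above $\rk(f)$.

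The main computation is that $G^*$ is autoreduced. That $f$ is reduced with respect to each $g_i$ with $i < k$ is part of the hypothesis, and the $g_i$ remain mutually reduced because $G$ is autoreduced, so it remains to check that each such $g_i$ is reduced with respect to $f$. For partial reducedness: if a proper derivative of $u_f$ occurred in $g_i$, then $u_{g_i}$ would have strictly larger order than $u_f$, forcing $\rk(g_i) > \rk(f)$, contrary to $i < k$. For the degree condition: if $u_f$ occurs in $g_i$ at all, then $\rk(g_i) < \rk(f)$ forces $u_{g_i} = u_f$ (otherwise the leader of $g_i$ would outrank $u_f$ already in the first coordinates of the rank tuple, giving $\rk(g_i) > \rk(f)$), and comparing the final coordinates of the two tuples yields $\deg_{u_f}(g_i) < \deg_{u_f}(f)$. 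Granting this, $\rk(G^*) < \rk(G)$ is immediate from the definition of the rank of an autoreduced set: the tuples agree in their first $k$ entries, while in position $k$ the set $G^*$ carries $\rk(f)$, strictly below both $\rk(g_k)$ (if $k < l$) and $\infty$ (if $k = l$).

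The step I expect to be the genuine obstacle --- and the place where passing from characteristic $0$ to arbitrary characteristic costs something --- is checking that $G^*$ belongs to the family over which $G$ is minimal, that is, that the separant condition $S(f) \notin M$ is preserved. Since $S(f)$ is obtained by differentiating $f$ with respect to $u_f$, it introduces no new derivatives and has $u_g$-degree no larger than that of $f$ for every $g \in G$, so it is again reduced with respect to $G$, and whenever $S(f) \notin R$ its rank is strictly below $\rk(f)$. If $S(f) \notin M$ the contradiction is reached. If instead $S(f) \in M$, then the minimal choice of $f$ forces $S(f) \in R$, since a lower-rank reduced element of $M \setminus R$ would contradict minimality. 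The residual, genuinely characteristic-$p$ phenomenon is that $S(f)$ may even vanish, when the leader of $f$ occurs only to $p$-th-power multiplicities --- which is precisely why powers are tracked in $\mathcal{D}Y^*$ and in $\rk$, and where the separability hypothesis is the key ingredient for controlling the separant and excluding this case. Settling this final possibility, rather than the routine rank bookkeeping of the preceding steps, is the heart of adapting Tressl's characteristic-zero argument.
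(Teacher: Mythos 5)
Your first two paragraphs are correct, and they are exactly the classical Ritt--Kolchin argument that the paper is implicitly invoking when it states this lemma without proof (cf.\ Kolchin, Chapter I, Section 10): if some $f \in M \setminus R$ were reduced with respect to $G$, then splitting $G$ at $\rk(f)$ and inserting $f$ yields an autoreduced subset of $M$ of strictly smaller rank. The gap is in your third paragraph, and you have named it yourself without closing it: under the paper's definition of characteristic set, $G^*$ must also satisfy the admissibility condition $S(g) \notin M$ for every $g \in G^*$, and you never settle the case $S(f) \in M$. Your minimality argument only reduces it to $S(f) \in R$; the subcase $S(f) \in (M \cap R)\setminus\{0\}$ is not addressed at all (the lemma does not assume $M \cap R = 0$), and for the subcase $S(f) = 0$ you appeal to ``the separability hypothesis'' --- but this lemma has no separability hypothesis: $M$ is an arbitrary differential ideal not contained in $R$. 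Separability enters the paper only in Lemma \ref{lem_coherent_autoreduced_finiteness} and the main theorem, so it is not available here.

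Moreover, the missing case genuinely cannot be closed, because with the separant condition built into the definition of characteristic set the statement is false in this generality. Take $R$ of characteristic $p$ with one derivation and $M = [Y_0, Y_1^p]$. One checks that $M \cap R\{Y_1\} = (Y_1^p)$, and that every partial derivative of an element $hY_1^p$ of $(Y_1^p)$ lies again in $(Y_1^p)$ (since $\partial (Y_1^p)/\partial Y_1 = pY_1^{p-1} = 0$); hence any element of $M$ that is reduced with respect to a polynomial with leader $Y_0$ has its separant in $M$ and is inadmissible. It follows that $\{Y_0\}$ is a characteristic set of $M$ in the paper's sense, yet $Y_1^p \in M \setminus R$ is reduced with respect to $\{Y_0\}$. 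In Kolchin's actual definition a characteristic set is simply an autoreduced subset of $M$ of minimal rank, with no side condition on separants; there $\{Y_0, Y_1^p\}$ has smaller rank than $\{Y_0\}$ and the lemma holds. So the correct route is to read ``characteristic set'' in that standard sense: admissibility of $G^*$ is then automatic, your first two paragraphs already constitute a complete proof (no minimal-rank choice of $f$ and no discussion of $S(f)$ is needed), and your third paragraph disappears. The fact that separants of a characteristic set avoid the ideal is a separate result, valid for \emph{prime, quasi-separable} differential ideals (Kolchin, Chapter III, Section 8); it is an input to the later results of the paper, not a condition your construction must preserve inside this proof.
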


\begin{defn} 
    Let $f \in R\{Y\} \setminus R$, and write $f = f_d u_f^d + ... + f_1 u_f + f_0$, where $f_i \in R[y \in \DY, y \neq u_f]$, and $f_d \neq 0$. The \textbf{initial of $f$}, denoted $I(f)$ is:
    \[
    I(f) = f_d.
    \]
    The \textbf{separant of $f$}, denoted $S(f)$ is:
    \[
    S(f) = \frac{\partial}{\partial u_f} f = df_du_f^{d-1} + ... + f_1.
    \]
    Finally, for any autoreduced subset $G = \{g_0,...,g_{l-1}\}$ of $R\{Y\}$, define
    \[
    H(G) = \prod_{i < l} I(g_i)\cdot S(g_i),
    \]
    i.e. $H(G)$ is the product of all initials and separants of elements of $G$, and
    \[
    H_G = \left\{ \prod_{i < l} I(g_i)^{n_i} S(g_i)^{m_i} : n_i, m_i \in \N \right\},
    \]
    i.e. $H_G$ is the set of all products of powers of initials and separants of elements of $G$.
\end{defn}

\begin{prop}[{\cite[Chapter I, Section 9, Proposition 1]{Kolchin1973}}]\label{prop_kolchin_division}
    Let $G$ be an autoreduced set in $R\{Y\}$ and $f \in R\{Y\}$. There exists $\tilde{f} \in R\{Y\}$ which is reduced with respect to $G$, and $H \in H_G$ such that
    \[
    H \cdot f \equiv \tilde{f} \mod{[G]}.
    \]
    If in addition $f$ is partially reduced with respect to $G$, then there is $h \in H_G$ such that
    \[
    H \cdot f \equiv \tilde{f} \mod{(G)}.
    \]
\end{prop}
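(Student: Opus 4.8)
The plan is to reduce $f$ in two successive phases: first pseudo-divide by \emph{derivatives} of elements of $G$ to make $f$ partially reduced, and then pseudo-divide by the elements of $G$ themselves to make it fully reduced. Throughout, the guiding observation is that for each $g \in G$ both the separant $S(g)$ and the initial $I(g)$ are themselves reduced with respect to $G$: being built from $g$, which is partially reduced with respect to $G$, they contain no proper derivative of any leader, and each has $u_g$-degree strictly below $\deg_{u_g}(g)$. Hence multiplying by separants or initials never introduces a derivative or a power of a leader of rank at least that of the quantity we are trying to remove, and this is what will control both termination and non-interference.

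First I would carry out the partial reduction. While $f$ is not partially reduced with respect to $G$, let $v = \theta u_g$ be the proper derivative of a leader of maximal rank appearing in $f$, and set $e = \deg_v(f)$. Applying $\theta$ to $g$ gives $\theta g = S(g)\,v + w$, where $w$ collects terms of rank strictly below $\rk(v)$, so in particular $v$ does not occur in $w$. Writing $f = c_e v^e + (\text{lower in } v)$, one checks that $S(g)\,f - c_e v^{e-1}\,\theta g$ has $v$-degree at most $e-1$; iterating $e$ times replaces $f$ by a polynomial in which $v$ no longer occurs, at the cost of a factor $S(g)^e$ and a summand lying in $[G]$. Every term so introduced has rank below $\rk(v)$, so the maximal rank of a proper derivative appearing in $f$ strictly decreases, and the well-ordering of $\O$ forces termination. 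The outcome is a partially reduced $f'$ with $H_1 \cdot f \equiv f' \bmod [G]$ for some product $H_1$ of separants.

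Next I would fully reduce $f'$, processing the leaders $u_g$ in strictly decreasing order of rank (the ranks are distinct since $G$ is autoreduced). For the current $g$, if $\deg_{u_g}(f') \geq d$ where $d = \deg_{u_g}(g)$, then writing $g = I(g)\,u_g^{d} + (\text{lower in } u_g)$ and subtracting a suitable $u_g$-power multiple of $g$ after multiplying $f'$ by $I(g)$ lowers the $u_g$-degree; iterating drives $\deg_{u_g}$ below $d$, using only the algebraic ideal $(G)$ and a power of $I(g)$. The decreasing-rank order is what makes this safe: any $g$ of smaller rank cannot contain a leader of larger rank, since such a leader would exceed $u_g$ in rank, so reducing with respect to lower-rank elements leaves the degree bounds already secured for higher-rank leaders untouched; moreover multiplication by $I(g)$ and subtraction of multiples of $g$ preserve partial reducedness, as $g$ and $I(g)$ contain no proper derivative of any leader. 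After exhausting $G$ we obtain $\tilde f$, reduced with respect to $G$, with $H_2 \cdot f' \equiv \tilde f \bmod (G)$ for some product $H_2$ of initials. Setting $H = H_1 H_2 \in H_G$ and using $(G) \subseteq [G]$ gives $H \cdot f \equiv \tilde f \bmod [G]$.

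Finally, the partially reduced case is immediate: if $f$ is already partially reduced I simply skip the first phase, so the whole argument stays inside the algebraic ideal $(G)$ and $H = H_2$ is a product of initials alone, yielding $H \cdot f \equiv \tilde f \bmod (G)$ with $H \in H_G$. I expect the main obstacle to be not any single computation but the bookkeeping that guarantees termination together with non-interference between the two phases and between leaders of different ranks; this is precisely what the reducedness of the initials and separants, combined with the well-ordering of $\O$, is designed to supply.
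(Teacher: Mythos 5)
Your proposal is correct and follows essentially the same route as the paper's source: the paper gives no proof of its own but cites Kolchin (Chapter I, Section 9, Proposition 1), whose argument is exactly your two-phase reduction — first partial reduction using $\theta g = S(g)\,\theta u_g + (\text{lower rank terms})$ and the well-ordering of ranks, then pseudo-division by the elements of $G$ in decreasing order of leader rank, with the second phase alone (hence only initials and the algebraic ideal $(G)$) needed when $f$ is already partially reduced.
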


\section{(Quasi-)separable Rings and Ideals}

We recall the notions of separable and quasi-separable rings and ideals from \cite[Chapter 0]{Kolchin1973}. 

\begin{defn}
    Let $L/K$ be a field extension. We say that a family of elements $(a_i)_{i \in I}$ of $L$ is \textbf{separably dependent over $K$} if there is a polynomial $f \in K[x_i : i \in I]$ which vanishes at $(a_i)$, and at least one of the partial derivatives $\frac{\partial f}{\partial x_i}$ does not vanish at $(a_i)$.
\end{defn}

\begin{fact}[{\cite[Chapter 0, Section 3]{Kolchin1973}}]
    We observe the following: let $L/K$ be a field extension as above. For an arbitrary family $(a_i)_{i \in I}$ of elements of $L$, if there is a subset $J \subseteq I$ for which $(a_i)_{i \in J}$ is a transcendence basis for $K(a_i : i \in I)$ over $K$ such that $I \setminus J$ is finite, then the cardinality $r = |I \setminus J|$ is independent of the choice of $J$, and $r$ is called the \textbf{algebraic codimension} of $(a_i)$ over $K$, and say that $(a_i)$ has \textbf{finite algebraic codimension over $K$}.
\end{fact}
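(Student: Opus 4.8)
The plan is to reduce the statement to the invariance of transcendence degree, while being careful that when $I$ is infinite one cannot simply ``subtract'' cardinalities. Write $F = K(a_i : i \in I)$. Any two transcendence bases of $F/K$ have the same cardinality $\trdeg(F/K)$, so it suffices to fix two subsets $J_1, J_2 \subseteq I$, each a transcendence basis of $F$ over $K$ with $I \setminus J_1$ and $I \setminus J_2$ finite, and to prove that $|I \setminus J_1| = |I \setminus J_2|$.

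First I would decompose $I$ along the two bases. Set $C = J_1 \cap J_2$, $A = J_1 \setminus J_2$, $B = J_2 \setminus J_1$, and $D = I \setminus (J_1 \cup J_2)$, so that $I = A \sqcup B \sqcup C \sqcup D$. Since $A \subseteq I \setminus J_2$ and $B \subseteq I \setminus J_1$, both $A$ and $B$ are finite, and one reads off
\[
I \setminus J_1 = B \sqcup D, \qquad I \setminus J_2 = A \sqcup D,
\]
with every piece finite. Hence $|I \setminus J_1| = |B| + |D|$ and $|I \setminus J_2| = |A| + |D|$ are honest sums of natural numbers, and the claim reduces to the single equality $|A| = |B|$. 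This formulation deliberately never asks for $|I| - |J|$, so it is immune to the failure of cancellation for infinite cardinals.

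The key step is to identify $|A|$ and $|B|$ with a transcendence degree over the common part $C$. Because $(a_i)_{i \in J_1}$ is algebraically independent over $K$ and $J_1 = A \sqcup C$, the subfamily $(a_i)_{i \in A}$ is algebraically independent over $K(a_i : i \in C)$; and because $J_1$ is a transcendence basis, $F$ is algebraic over $K(a_i : i \in J_1) = K(a_i : i \in C)(a_i : i \in A)$. Thus $(a_i)_{i \in A}$ is a transcendence basis of $F$ over $K(a_i : i \in C)$, and symmetrically so is $(a_i)_{i \in B}$. Invariance of transcendence degree, applied over the base field $K(a_i : i \in C)$, then gives $|A| = |B| = \trdeg\!\big(F / K(a_i : i \in C)\big)$, and combining this with the previous paragraph yields $|I \setminus J_1| = |I \setminus J_2|$, as desired.

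The only place that requires genuine care is the infinite case: there the equality must be extracted from the \emph{finite} symmetric differences $A$ and $B$ rather than from a cardinal subtraction, and this is precisely what the decomposition $I = A \sqcup B \sqcup C \sqcup D$ accomplishes. I would also remark that, since each family indexed by $J_1$ or $J_2$ is algebraically independent, the index-to-element maps are injective on these sets, so counting indices agrees with counting elements throughout the argument; otherwise the bookkeeping is routine.
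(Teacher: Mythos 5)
Your argument is correct and complete. For comparison: the paper does not actually prove this statement at all --- it is recorded as a \emph{Fact} with a citation to Kolchin's Chapter 0, so there is no in-paper argument to measure yours against; your proof supplies the missing details, and along the standard lines. The decomposition $I = A \sqcup B \sqcup C \sqcup D$ with $A = J_1 \setminus J_2$, $B = J_2 \setminus J_1$, $C = J_1 \cap J_2$, $D = I \setminus (J_1 \cup J_2)$ keeps every relevant set finite (each of $A$, $B$, $D$ sits inside one of the finite sets $I \setminus J_1$, $I \setminus J_2$), so the claim $|I\setminus J_1| = |I\setminus J_2|$ really does reduce to the equality $|A| = |B|$ of natural numbers, with no infinite cardinal arithmetic. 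The key identification $|A| = |B| = \trdeg(F / K(a_i : i \in C))$ is justified exactly as you say: a subfamily of an algebraically independent family is algebraically independent over the field generated by $K$ and the complementary subfamily, and $F$ stays algebraic over $K(a_i : i \in J_1) = K(a_i : i \in C)(a_i : i \in A)$, so $(a_i)_{i \in A}$ and $(a_i)_{i \in B}$ are both transcendence bases of $F$ over $K(a_i : i \in C)$ and invariance of transcendence degree applies. Your closing remark that algebraic independence forces $i \mapsto a_i$ to be injective on $J_1$ and on $J_2$ is precisely the bookkeeping point needed to pass between cardinalities of index sets and of sets of field elements, so nothing is missing.
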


\begin{defn}
    A field extension $L/K$ (not necessarily algebraic) is \textbf{separable} (respectively, \textbf{quasi-separable}) if every family of elements of $L$ which is separably independent over $K$ has algebraic codimension $0$ (respectively, finite algebraic codimension).
\end{defn}

\begin{defn}
    Let $R \subseteq S$ be rings, where $S$ is a domain. We say that $S$ is \textbf{separable} (respectively, \textbf{quasi-separable}) over $R$ if $Q(S)$ is a separable (respectively, \textbf{quasi-separable}) field extension of $Q(R)$, where $Q(S)$ and $Q(R)$ denote the quotient fields of $S$ and $R$, respectively.

    Let $\mathfrak{k} \subset S$ be a prime ideal, and $f$ denote the quotient map $S \to S/\mathfrak{k}$. We say that $\mathfrak{k}$ is \textbf{separable} (respectively, \textbf{quasi-separable}) over $R$ if $f(S)$ is separable (respectively, quasi-separable) over $f(R)$.
\end{defn}

\begin{rmk}
    In particular, we note the special case when $L/K$ is a separable field extension, any $K$-subalgebra of $L$ is a separable $K$-algebra.
\end{rmk}

From now, let $S = R\{Y\}$ be a differentially finitely generated differential polynomial algebra over $R$. For a set $A \subseteq S$, denote the differential ideal generated by $A$ by $[A]$. For an ideal $\mathfrak{k} \subseteq S$ and $s \in S$, define the \textbf{saturated ideal of $\mathfrak{k}$ over $s$} as
\[
    \mathfrak{k} : s^\infty = \{h \in S : s^n h \in \mathfrak{k} \text{ for some } n \in \N\}
\]
It is easy to verify by direct computation that if $\mathfrak{k}$ is a differential ideal, then $\mathfrak{k}:s^\infty$ is also differential.

\begin{defn}[{\cite[Chapter III, Section 8]{Kolchin1973}}]
    Let $G$ be an autoreduced set in $S$, and $\mathfrak{k}$ be an ideal (not necessarily differential) of $S$. We say that $G$ is  \textbf{$\mathfrak{k}$-coherent} if the following hold:
    \begin{enumerate}
        \item $\mathfrak{k}$ has a set of generators partially reduced with respect to $G$;
        \item $[\mathfrak{k}] \subseteq ([G] + \mathfrak{k}) : H(G)^\infty$
        \item For any $f, g \in G$, and $v$ a common derivative of $u_f, u_g$, say $v = \theta_f u_f = \theta_g u_g$, we have that
        \[
        S(g)\theta_f f - S(f)\theta_g g \in ((G_v) + \mathfrak{k}) : H(G)^\infty,
        \]
        where $G_v$ denotes the set of all differential polynomials of the form $\tau h$, where $h \in G$, $\tau \in \Theta$, and $\tau u_h$ is of lower order than $v$.
    \end{enumerate}
\end{defn}

\begin{lem}[{\cite[Chapter III, Section 8, Lemma 3]{Kolchin1973}}] \label{lem_coherent_autoreduced_finiteness}
    Let $\mathfrak{p}$ be a prime differential ideal of $S$ which is quasi-separable over $R$, and let $G$ be a characteristic set of $\mathfrak{p}$. Then, there exists a finite set $Y' \subseteq \DY$ of derivatives of the indeterminates $Y$, each partially reduced with respect to $G$, such that by setting $\mathfrak{p}_1 = \mathfrak{p} \cap R[Y']$, $G$ is $S\mathfrak{p}_1$-coherent and $\mathfrak{p} = ([G] + S\mathfrak{p}_1) : H(G)^\infty$. The set $Y'$ may be replaced by any larger finite set of derivatives of $Y$ partially reduced with respect to $A$.
\end{lem}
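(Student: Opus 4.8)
The plan is to combine the reduction theory of Proposition~\ref{prop_kolchin_division} and Lemma~\ref{lem_char_set_not_reduced} with quasi-separability, which in positive characteristic is exactly what keeps the separants non-degenerate modulo $\mathfrak{p}$. First I would record the basic facts about $G$. Since $G$ is a characteristic set of the prime ideal $\mathfrak{p}$ we have $G\subseteq\mathfrak{p}$, and the separants $S(g)$ avoid $\mathfrak{p}$ by the definition of a characteristic set; a short minimality argument shows that the initials $I(g)$ avoid $\mathfrak{p}$ as well (otherwise $g-I(g)u_g^{d}$, with $d=\deg_{u_g}(g)$, would be a lower-rank element of $\mathfrak{p}$). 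Hence $H(G)\notin\mathfrak{p}$, so that saturation by $H(G)^\infty$ is non-degenerate. Conceptually, in $D=S/\mathfrak{p}$ quasi-separability guarantees that each leader is separably algebraic over the subfield generated by the lower-ranked derivatives, which is the content of $S(g)\notin\mathfrak{p}$ and is what ultimately forces the reductions below to terminate.

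Next I would prove the equality $\mathfrak{p}=([G]+S\mathfrak{p}_1):H(G)^\infty$, which in fact holds for any finite set $Y'$ of derivatives partially reduced with respect to $G$, since $\mathfrak{p}\cap R\subseteq\mathfrak{p}_1$ automatically. The inclusion $\supseteq$ is immediate: $[G]+S\mathfrak{p}_1\subseteq\mathfrak{p}$, and as $\mathfrak{p}$ is prime with $H(G)\notin\mathfrak{p}$ the saturation stays inside $\mathfrak{p}$. For $\subseteq$, take $f\in\mathfrak{p}$ and apply Proposition~\ref{prop_kolchin_division} to get $H\in H_G$ and $\tilde f$ reduced with respect to $G$ with $Hf\equiv\tilde f\pmod{[G]}$. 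Then $\tilde f\in\mathfrak{p}$ is reduced, so Lemma~\ref{lem_char_set_not_reduced} gives $\tilde f\in\mathfrak{p}\cap R\subseteq\mathfrak{p}_1$, whence $Hf\in[G]+S\mathfrak{p}_1$ and $f\in([G]+S\mathfrak{p}_1):H(G)^\infty$.

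The substantive part is choosing a finite $Y'$ for which $G$ is $S\mathfrak{p}_1$-coherent. Conditions (1) and (2) are cheap: taking $Y'$ partially reduced makes the generators of $\mathfrak{p}_1$ partially reduced, and $[S\mathfrak{p}_1]\subseteq\mathfrak{p}$ together with the equality above yields (2). Condition (3) is the heart of the matter. It suffices to treat least common derivatives, the general case following by differentiation, so for each of the finitely many pairs $f,g\in G$ whose leaders are derivatives of a common variable, let $v$ be their least common derivative and set $\Delta_{fg}=S(g)\theta_f f-S(f)\theta_g g$; this has rank strictly below $v$ and lies in $[G]\subseteq\mathfrak{p}$. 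I would reduce $\Delta_{fg}$ in two stages: first eliminate the proper derivatives of leaders it contains by subtracting $R[\DY]$-multiples of the derivatives $\tau h\in G_v$, at the cost of powers of separants, producing a partially reduced element of $\mathfrak{p}$ of rank below $v$; then reduce leader-degrees via Proposition~\ref{prop_kolchin_division} to a reduced element, which by Lemma~\ref{lem_char_set_not_reduced} lies in $\mathfrak{p}\cap R\subseteq\mathfrak{p}_1$. Since every intermediate term has rank below $v$, only elements of $G_v$ are used, giving $\Delta_{fg}\in((G_v)+S\mathfrak{p}_1):H(G)^\infty$. I would then let $Y'$ be the finite set of all derivatives occurring in $G$, the leaders, and the finitely many derivatives occurring in these reductions; quasi-separability, through finite algebraic codimension, is what ensures this process closes up within a finite set of derivatives rather than cascading indefinitely.

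The enlargement clause is then routine: replacing $Y'$ by any larger finite partially reduced set only adds further partially reduced generators to $\mathfrak{p}_1$ and leaves $[G]$ and $H(G)$ unchanged, so all three coherence conditions and the saturation equality persist. I expect the genuine obstacle to be condition (3) together with the finiteness of $Y'$: one must verify that the $\Delta$-polynomial reductions terminate inside a fixed finite set of derivatives and inside $S\mathfrak{p}_1$, and it is precisely here that quasi-separability is indispensable, since in positive characteristic an inseparable $\mathfrak{p}$ could have separants vanishing modulo $\mathfrak{p}$, collapsing the saturation, and could force infinitely many partially reduced derivatives into any generating set.
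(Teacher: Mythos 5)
You should note at the outset that the paper offers no proof of this lemma at all --- it is imported from Kolchin with a citation --- so your attempt has to be judged on its own terms, and it has a genuine gap: \emph{quasi-separability is never actually used}, and as a consequence your argument proves a statement that is false. As you yourself observe, your reduction scheme establishes the saturation identity and the coherence conditions for an \emph{arbitrary} finite set $Y'$ of partially reduced derivatives (even $Y'=\emptyset$), because every time you produce a remainder $\tilde f$ reduced with respect to $G$ you invoke Lemma \ref{lem_char_set_not_reduced} to conclude $\tilde f\in\mathfrak{p}\cap R\subseteq\mathfrak{p}_1$. That strengthening fails. Take $p>0$, $R=\F_p(a)$ with the zero derivation, one derivation, two indeterminates $y,z$, and $\mathfrak{p}=[z',\,y^p-a]=(y^p-a)+(z',z'',z''',\ldots)$. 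This is a prime differential ideal with $\mathfrak{p}\cap R=0$, and its quotient field $R(a^{1/p})(z,y',y'',\ldots)$ is quasi-separable over $R$ (it is a degree-$p$ purely inseparable extension of a purely transcendental field, so every separably independent family has algebraic codimension at most $1$), though not separable. Every element of $\mathfrak{p}$ whose leader is $y$, $z$, or a derivative of $y$ has separant in $\mathfrak{p}$, since every partial derivative of $y^p-a$ vanishes; hence, with this paper's definition of characteristic set, the characteristic sets of $\mathfrak{p}$ are precisely the singletons with leader $z'$, e.g.\ $G=\{z'\}$, with $H(G)=1$. Now $y^p-a\in\mathfrak{p}\setminus R$ is already reduced with respect to $G$, so your second paragraph would force $y^p-a\in\mathfrak{p}\cap R=0$ and conclude $\mathfrak{p}=([G]+S(\mathfrak{p}\cap R)):H(G)^\infty=[z']$, which is false. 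The lemma itself is true for this example, but only because $Y'$ is chosen appropriately, e.g.\ $Y'=\{y,z\}$ with $\mathfrak{p}_1=(y^p-a)R[y,z]$.

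The root cause is your blanket appeal to Lemma \ref{lem_char_set_not_reduced}: the same example refutes that lemma as literally stated, because a reduced element of $\mathfrak{p}$ whose leader occurs only in $p$-th powers (so that its separant lies in $\mathfrak{p}$) cannot be used to build a lower-rank competitor autoreduced set satisfying the separant condition, and so the usual minimality argument breaks down. That lemma is sound when $\mathfrak{p}$ is \emph{separable} over $R$ --- which is why the main theorem's proof may use it --- but the statement you are proving assumes only quasi-separability, and there reduced elements of $\mathfrak{p}$ do not collapse into $R$: they merely lie in $R[V]$, where $V$ is the set of derivatives partially reduced with respect to $G$. The entire content of Kolchin's lemma is that \emph{finitely many} members of $V$ suffice to capture all such elements modulo $((G)+S\mathfrak{p}_1):H(G)^\infty$, and the finite-algebraic-codimension hypothesis is spent precisely there. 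Your write-up defers exactly this point to the closing remark that ``quasi-separability ensures the process closes up,'' with no argument given; together with the unproved assertion that coherence may be checked on least common derivatives, this is where the actual proof has to happen.
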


\begin{rmk}
    Recall that by the definition of a characteristic set, for each $a \in G$, $S(a) \not\in \mathfrak{p}$. Further, for any $g \in G$, $I(g) \not\in \mathfrak{p}$ by \cite[Chapter I, Section 10, Lemma 8]{Kolchin1973}. Since $\mathfrak{p}$ is a prime ideal, then we also have that $H_G$ does not contain any element of $\mathfrak{p}$, as for any $g \in G$, neither $I(g)$ nor $S(g)$ lie in $\p$.
\end{rmk}

We will also require an additional lemma which strengthens the above result for partially reduced polynomials:

\begin{lem}[{\cite[Chapter III, Section 8, Lemma 5]{Kolchin1973}}]
    Let $G$ be a $\mathfrak{k}$-coherent autoreduced set in $S$, and suppose that for each $g \in G$, $S(g)$ is not a zero-divisor in $S$. Then, every element of $([G] + \mathfrak{k}) : H(G)^\infty$ which is partially reduced with respect to $G$ lies in $((G) + \mathfrak{k}) : H(G)^\infty$.
\end{lem}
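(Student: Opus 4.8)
The containment $((G)+\mathfrak{k}):H(G)^\infty \subseteq ([G]+\mathfrak{k}):H(G)^\infty$ is immediate since $(G)\subseteq[G]$, so the content is the reverse inclusion for partially reduced elements. Let $p$ be partially reduced with respect to $G$ and lying in $([G]+\mathfrak{k}):H(G)^\infty$, so that $Hp \in [G]+\mathfrak{k}$ for some $H \in H_G$. By coherence condition~(1) we may take the generators of $\mathfrak{k}$ to be partially reduced with respect to $G$; moreover, for each $g\in G$ the initial $I(g)$ and separant $S(g)$ are partially reduced with respect to $G$ (since $g$ is reduced with respect to every other element of $G$ and its leader is of maximal rank in $g$), so $H$, and hence $Hp$, is partially reduced with respect to $G$. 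Writing this membership out yields a finite identity $Hp = \sum_j a_j\,\theta_j g_j + k$ with $a_j \in S$, $g_j \in G$, $\theta_j \in \mathcal{D}$, and $k\in\mathfrak{k}$ partially reduced. The terms with $\theta_j=\id$ already lie in $(G)$; the plan is to eliminate, modulo $(G)+\mathfrak{k}$ and at the cost of further factors from $H_G$, every proper-derivative term (those with $\theta_j\neq\id$), which will place $Hp$ in $(G)+\mathfrak{k}$ up to an $H(G)$-power and hence give $p\in((G)+\mathfrak{k}):H(G)^\infty$.

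To carry this out I would argue by downward induction on the well-order $\O$, taking as principal parameter the highest rank $v$ of a leader $\theta_j u_{g_j}$ occurring among the proper-derivative terms, and as secondary parameter the number of such terms attaining $v$. Recall that for a proper derivative $\theta g$ the leader $\theta u_g$ occurs linearly with coefficient $S(g)$, so $\theta g = S(g)\,v + r$, where $r$ involves only derivatives of rank strictly below $v$. If two proper-derivative terms share the top leader $v = \theta_f u_f = \theta_g u_g$, then coherence condition~(3) gives
\[
S(g)\,\theta_f f - S(f)\,\theta_g g \in \big((G_v)+\mathfrak{k}\big):H(G)^\infty,
\]
so that, after multiplying by a suitable power of $H(G)$, this difference is a combination of $\mathfrak{k}$ and derivatives whose leaders lie strictly below $v$. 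Multiplying the two offending terms by the appropriate separants (absorbed into the accumulating $H(G)$-power) and subtracting this relation collapses the pair into a single $v$-term plus strictly lower-leader terms, decreasing the secondary parameter without raising $v$.

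Once a single proper-derivative term $a\,\theta g$ of leader $v$ survives, I would compare the coefficients of $v$ in the resulting identity. Since $v$ is a proper derivative of the leader $u_g$, it cannot itself be the leader of any element of $G$ (in an autoreduced set no leader is a proper derivative of another), and both $Hp$ and $k$ are partially reduced; so, working modulo the algebraic ideal $(G)$, the surviving contribution to the top power of $v$ comes from $a\,\theta g = a\,S(g)\,v + ar$. Its leading coefficient in $v$ is a multiple of $S(g)$ and must vanish; since $S(g)$ is a non-zero-divisor in $S$ by hypothesis, this forces the contribution of the term to reduce to leaders strictly below $v$, lowering the principal parameter. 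Iterating removes every proper-derivative term and yields $H'Hp\in(G)+\mathfrak{k}$ for some $H'\in H_G$, whence $p\in((G)+\mathfrak{k}):H(G)^\infty$.

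The main obstacle is the bookkeeping of this downward induction: one must verify that each application of coherence strictly decreases the chosen well-ordered measure and does not reintroduce terms at level $v$, and one must carefully separate the harmless high-rank algebraic ($\theta_j=\id$) terms, which already lie in $(G)$, from the proper-derivative terms that drive the induction. Coherence condition~(3) is precisely the compatibility that makes the two-term collisions resolvable into lower-order data, and the non-zero-divisor hypothesis on the separants is exactly what is needed to discharge the final surviving top term.
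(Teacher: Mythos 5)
The paper itself does not prove this lemma; it quotes it from Kolchin (Chapter III, Section 8, Lemma 5), so your attempt has to be measured against the classical Rosenfeld--Kolchin argument. Your overall architecture matches it: double descent on the highest leader $v$ occurring among the proper-derivative terms and on the number of terms attaining $v$, with coherence condition (3) used to collapse a collision of two terms sharing the leader $v$ into a single one plus lower data. That part of the sketch, including the observation that $H$ is partially reduced with respect to $G$, is sound.

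The genuine gap is in your final step, which is precisely where the non-zero-divisor hypothesis must do its work. You assert that $k$ is partially reduced and that therefore, modulo $(G)$, the only contribution to the top power of $v$ is $a\,S(g)$, which ``must vanish''. Neither claim is correct. Coherence condition (1) provides partially reduced \emph{generators} of $\mathfrak{k}$, but the element $k$ in your identity is an $S$-linear combination $\sum_i d_i k_i$ whose coefficients $d_i$ may involve $v$; likewise the coefficients of the $\theta_j=\id$ terms and of the lower proper-derivative terms may involve $v$ to arbitrary degree (indeed your own collision step, which imports coefficients from the coherence relation, reintroduces such dependence). Consequently, equating coefficients of the top power of $v$ yields only an ideal-membership statement of the shape $a_{\mathrm{top}}S(g)\in (G_v)+(G)+\mathfrak{k}+(r)$, not a literal vanishing, and the hypothesis that $S(g)$ is not a zero divisor cannot be applied to cancel $S(g)$ from an ideal membership. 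The classical device that closes the induction is a substitution rather than a coefficient comparison: write $\theta g = S(g)\,v + r$, note that $Hp$, the $g_j$, the lower $\theta_j g_j$, the generators $k_i$ and $r$ are all free of $v$, and apply to the whole identity the homomorphism over the $v$-free subring sending $v\mapsto -r/S(g)$ into the localization $S[S(g)^{-1}]$ (equivalently, multiply through by a high power $S(g)^N$ and replace each $S(g)^i v^i$ by $(-r)^i$). This annihilates the term $a\,\theta g$ exactly, leaves everything $v$-free unchanged, and converts all remaining coefficients into elements of $S$ after clearing denominators; the non-zero-divisor hypothesis is what allows the resulting identity in the localization to be read back as an identity in $S$. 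One then obtains a representation of $S(g)^N H p$ whose proper-derivative terms all have leader strictly below $v$, and since $S(g)^N H \in H_G$ the descent proceeds. Without replacing your coefficient-comparison step by this substitution (or an equivalent mechanism), the induction does not close.
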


In particular, in the case where $G$ is a characteristic set of a prime differential ideal $\p \subseteq S$, we have the following:

\begin{cor} \label{cor_coherent_partially_reduced}
    Let $\p$, $G$ and $\p_1$ be as in Lemma \ref{lem_coherent_autoreduced_finiteness}. Then, any $f \in \p$ partially reduced with respect to $G$ lies in $((G) + S\p_1) : H(G)^\infty$.
\end{cor}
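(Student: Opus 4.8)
The plan is to obtain the corollary as a specialisation of the previous lemma (\cite[Chapter III, Section 8, Lemma 5]{Kolchin1973}), applied with $\mathfrak{k} = S\p_1$. Lemma \ref{lem_coherent_autoreduced_finiteness} already supplies the two structural facts this needs: that $G$ is $S\p_1$-coherent, and that $\p = ([G] + S\p_1) : H(G)^\infty$. Granting the remaining hypothesis of that lemma, namely that each separant $S(g)$ with $g \in G$ is a non-zero-divisor in $S$, it yields that every element of $([G] + S\p_1) : H(G)^\infty = \p$ which is partially reduced with respect to $G$ lies in $((G) + S\p_1) : H(G)^\infty$; applied to the given $f$, this is exactly the desired conclusion. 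So the whole argument reduces to the non-zero-divisor condition, which does require care, since it can genuinely fail once $\p \cap R \neq 0$.

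For this condition I would use two facts. First, by the remark following Lemma \ref{lem_coherent_autoreduced_finiteness}, $S(g) \notin \p$ for every $g \in G$ (this non-vanishing is part of the definition of a characteristic set, and is where the separability hypothesis is ultimately felt), so in particular $S(g) \neq 0$; this is the point at which positive characteristic is delicate, as separants can otherwise collapse to $0$. Second, by McCoy's theorem an element of the polynomial ring $S = R[\DY]$ is a zero-divisor if and only if it is annihilated by a single nonzero element of $R$. Thus if $c \in R$ satisfies $c\,S(g) = 0$, then $c\,S(g) = 0 \in \p$, and since $\p$ is prime with $S(g) \notin \p$ I conclude $c \in \p \cap R$. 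When $\p \cap R = 0$ this forces $c = 0$, so $S(g)$ is a non-zero-divisor; this already covers the case where $R$ is a domain, which is Kolchin's original setting and in which $S$ is itself a domain.

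In general I would first reduce to $\p \cap R = 0$ by base change along $R \to R/(\p \cap R)$, replacing $S$ by $(R/(\p \cap R))\{Y\}$ and $\p$ by its image $\bar{\p}$. This leaves the leaders, and hence $G$ and its rank, unchanged, and keeps the initials and separants nonzero, since $I(g), S(g) \notin \p \supseteq (\p \cap R)S$; it also preserves the quasi-separability of $\p$, which is defined through $S/\p$ over $R/(\p \cap R)$ and so is unaffected. The membership proved downstairs lifts back to $S$ because every ideal occurring in the statement contains $(\p \cap R)S$ — indeed $(\p \cap R)S \subseteq S\p_1$, as $\p \cap R \subseteq \p \cap R[Y'] = \p_1$. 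I expect the main obstacle to be precisely this non-zero-divisor verification, together with checking that the base change is compatible with the choice of characteristic set $G$ and with the contraction $\p_1 = \p \cap R[Y']$; once these are settled, the corollary drops out of the lemma.
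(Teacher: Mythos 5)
Your proposal is correct, and its core is exactly the paper's (implicit) argument: the corollary is presented as an immediate specialisation of Kolchin's Lemma 5 (Chapter III, Section 8, the unlabelled lemma preceding the corollary) with $\mathfrak{k} = S\p_1$, the $S\p_1$-coherence of $G$ and the identity $\p = ([G]+S\p_1):H(G)^\infty$ being exactly what Lemma \ref{lem_coherent_autoreduced_finiteness} supplies. Where you go beyond the paper is in the non-zero-divisor hypothesis on the separants. The paper disposes of this silently: by the remark following Lemma \ref{lem_coherent_autoreduced_finiteness} one has $S(g) \notin \p$, and in the setting where the corollary is actually used (the proof of Theorem \ref{str_thm_sep}, where $R$ is a subring of a differential domain) $R$, and hence $S = R\{Y\}$, is a domain, so $S(g) \neq 0$ already makes $S(g)$ a non-zero-divisor. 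Your McCoy-plus-base-change argument buys the corollary in its literal generality, where $R$ is an arbitrary differential ring: there $S(g) \notin \p$ alone does not rule out zero-divisors (e.g. $R = k \times k$ with $\p \cap R \neq 0$), and reducing modulo $\p \cap R$ is the right fix, the memberships lifting back precisely because $(\p \cap R)S \subseteq S\p_1$, as you note. Two points to nail down in that reduction, both of which you flag: (i) since $I(g), S(g) \notin \p \supseteq (\p\cap R)S$, the images of elements of $G$ keep their leaders, degrees, initials and separants, so the image of $G$ remains autoreduced of the same rank; (ii) do not re-run Lemma \ref{lem_coherent_autoreduced_finiteness} downstairs, since it would produce its own finite set of derivatives and hence a possibly larger contraction than the image of $\p_1$; instead push the $S\p_1$-coherence conditions and the membership $H(G)^n f \in [G] + S\p_1$ forward through the quotient map directly (it commutes with the derivations and preserves leaders), after which Kolchin's Lemma 5 applies downstairs and the conclusion contracts back as you describe.
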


\section{The Structure Theorem for Separable Algebras}

We state and prove the main theorem, which is an adapted form of \cite[Theorem 1]{Tressl2002}:

\begin{thm}\label{str_thm_sep}
    Let $(S, \bpartial)$ be a differential domain, and let $(R, \bdelta)$ be a differential subring of $S$, with $R$ Noetherian as a ring, such that:
    \begin{enumerate}
        \item $S$ is separable over $R$; and,
        \item $S$ is differentially finitely generated as a differential $R$-algebra.
    \end{enumerate}
    Then, there exist (not necessarily differential) $R$-subalgebras $P$ and $B$ of $S$ and an element $h \in B$ with $h \neq 0$ such that:
    \begin{enumerate}[label=(\alph*)]
        \item $B$ is a finitely generated $R$-algebra, and $B_h$ is a finitely presented $R$-algebra;
        \item $P$ is a polynomial algebra over $R$;
        \item $S_h = (B\cdot P)_h$, and $S_h$ is a differentially finitely presented $R$-algebra;
        \item The homomorphism $B \otimes_R P \to B \cdot P$ induced by multiplication is an isomorphism of $R$-algebras.
    \end{enumerate}
\end{thm}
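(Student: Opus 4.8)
The plan is to realise $S$ as a differential quotient of a differential polynomial ring and to read off $P$, $B$ and $h$ from a characteristic set. Since $(S,\bpartial)$ is a differential domain that is differentially finitely generated over $R$, I would fix differential generators and write $S = R\{Y\}/\p$ with $Y = (Y_0,\dots,Y_{n-1})$ and $\p$ a prime differential ideal; write $\bar a$ for the image in $S$ of $a \in R\{Y\}$. Separability of $S$ over $R$ forces $\p$ to be separable, hence quasi-separable, over $R$, so Lemma~\ref{lem_coherent_autoreduced_finiteness} is available. I would fix a characteristic set $G = \{g_0,\dots,g_{l-1}\}$ of $\p$ with leaders $u_{g_0},\dots,u_{g_{l-1}}$ and take the finite set $Y' \subseteq \DY$ it supplies, enlarging it (as the lemma permits, since every derivative occurring in some $g_j$ is partially reduced with respect to $G$) so that each $g_j \in R[Y']$; put $\p_1 = \p \cap R[Y']$, so that $\p = ([G] + S\p_1):H(G)^\infty$. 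Call $w \in \DY$ \emph{parametric} if it is not of the form $\theta u_{g_j}$ for any $\theta \in \mathcal{D}$ and $j < l$, and let $\Pi$ be the set of parametric derivatives. I then set $h = \overline{H(G)}$, $B = R[Y']/\p_1$ (the image of $R[Y']$ in $S$), and $P = R[\bar w : w \in \Pi \setminus Y']$. Here $h \neq 0$ and $h \in B$, since no $I(g_j)$ or $S(g_j)$ lies in the prime ideal $\p$ and all variables occurring in the $g_j$ lie in $Y'$.

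The core is a pair of (in)dependence statements. First, any nontrivial polynomial relation over $R$ among the $\bar w$ with $w \in \Pi$ would lift to some $f \in \p \setminus R$ involving only parametric variables; such an $f$ is reduced with respect to $G$ — partially reduced because no derivative of a leader occurs, and of degree $0$ in each leader — contradicting Lemma~\ref{lem_char_set_not_reduced}. Hence the $\bar w$, $w \in \Pi$, are algebraically independent over $R$, which is (b). Second, because each $g_j \in \p_1$, an induction on $\rk$ shows every leader is algebraic over $Q(R)\big(\bar w : w \in \Pi \cap Y'\big)$, so $Q(B)$ is algebraic over this field; combined with the independence of all of $\Pi$, the remaining parametric elements $\bar w$, $w \in \Pi \setminus Y'$, are algebraically independent over $Q(B)$. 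Therefore the multiplication map $B \otimes_R P \to B\cdot P$ carries the polynomial generators of $P$ to elements algebraically independent over $B$ and is an isomorphism, giving (d).

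It remains to handle (a) and (c). Finite generation of $B$ is clear as $R$ is Noetherian and $Y'$ finite; a finitely generated algebra over a Noetherian ring is finitely presented, so $B_h$ is finitely presented, proving (a). For $S_h = (B\cdot P)_h$ I would check that every $\bar{\theta Y_i}$ lies in $(B\cdot P)_h$: parametric derivatives lie in $B\cdot P$ by construction and the leaders lie in $B$, while for a proper derivative I induct on $\rk$, using that $\delta_i g_j = S(g_j)\,\delta_i u_{g_j} + (\text{lower-rank terms})$ together with $\delta_i g_j \in \p$; since $S(g_j)$ divides $H(G)$, its image is invertible in $S_h$ and $\delta_i u_{g_j}$ is expressed through strictly lower-rank derivatives already placed in $(B\cdot P)_h$. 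For differential finite presentation of $S_h$, I would use that the differential localisation $R\{Y\}[1/H(G)] \cong R\{Y,Z\}/[H(G)Z - 1]$ is differentially finitely presented, and that inverting $H(G)$ trivialises the saturation, so that $S_h \cong R\{Y\}[1/H(G)]/\big([G] + S\p_1\big)$; this is differentially finitely presented because $G$ is finite and $\p_1$ is finitely generated over the Noetherian ring $R$.

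I expect the main obstacle to be the isomorphism (d): controlling the interaction between the finitely generated algebra $B$ and the infinitely generated polynomial algebra $P$. This hinges on the two independence facts, and in particular on having chosen $Y'$ large enough that $G \subseteq \p_1$ and the leaders become algebraic over the finitely many parametric derivatives inside $Y'$. The genuinely deep input, however, is Lemma~\ref{lem_coherent_autoreduced_finiteness}: quasi-separability is exactly what produces the finite $Y'$ and the identity $\p = ([G] + S\p_1):H(G)^\infty$ on which the localisation arguments rest, while the remainder is bookkeeping with ranks, separants and localisations.
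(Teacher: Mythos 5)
Your construction coincides with the paper's: the same characteristic set $G$, the same enlarged finite set $Y'$ from Lemma \ref{lem_coherent_autoreduced_finiteness} (the paper writes $V_B$ for your enlarged $Y'$), and the same $B$, $P$ and $h$; your proofs of (a), (b) and both halves of (c) are also the paper's arguments in substance --- your rank induction via $\theta g = S(g)\,\theta u_g + (\text{lower-rank terms})$ is an inlined proof of the division step the paper simply quotes as Proposition \ref{prop_kolchin_division} (one small wording fix: the inverse of $\overline{S(g_j)}$ must be taken in $(B\cdot P)_h$ rather than in $S_h$ to avoid circularity, which is legitimate since $S(g_j)$ divides $H(G)$ inside $B$). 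Where you genuinely diverge is (d). The paper first proves Claim \ref{claim_linear_dependence} --- any $P$-linear dependence among elements of $B$ descends to an $R$-linear dependence --- and this uses the second piece of coherence machinery, Corollary \ref{cor_coherent_partially_reduced} (Kolchin's Lemma 5), together with an evaluation homomorphism $R[V] \to R[V_B]$; even then, injectivity of $B \otimes_R P \to B\cdot P$ requires a further minimality argument combined with flatness of $P$ and base change to the quotient field of $R$. You instead argue field-theoretically inside $Q(S)$: the images of \emph{all} parametric derivatives are algebraically independent over $R$ (the same reduction to Lemma \ref{lem_char_set_not_reduced} as in Claim \ref{claim_restriction_not_vb_injective}), $Q(B)$ is algebraic over the subfield generated by the parametric elements of $Y'$ (rank induction, using $I(g) \notin \p$), hence the polynomial generators of $P$ remain algebraically independent over $Q(B)$, and injectivity of $B \otimes_R P \cong B[X_w : w] \to S$ is immediate since a nonzero kernel element would be an algebraic dependence over $Q(B)$. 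This is correct (algebraic extensions preserve algebraic independence, and all fields in question sit inside $Q(S)$ because $S$ is a domain), it is characteristic-free, and it buys a genuine simplification: Corollary \ref{cor_coherent_partially_reduced}, the evaluation map and the flatness/minimality step all become unnecessary, with (d) resting only on Lemma \ref{lem_char_set_not_reduced} and the fact that no initial or separant of $G$ lies in $\p$. What the paper's route buys in exchange is the standalone statement of Claim \ref{claim_linear_dependence}, which is of independent interest and is carried out at the level of rings rather than quotient fields. Both routes still depend essentially on Lemma \ref{lem_coherent_autoreduced_finiteness} for the finite set $Y'$ and the saturation identity $\p = ([G] + S\p_1):H(G)^\infty$, which is what drives part (c).
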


\begin{rmk}
    The assumption that $R$ is Noetherian does not appear in the theorem in \cite{Tressl2002}. We use this to show the finite-presentedness of $B_h$ and $S_h$, as Tressl's proof in characteristic 0 relies on Corollary 6 of \cite{Tressl2002}, which does not apply in the case of arbitrary characteristic. As we usually intend for $R$ to be a differential field, this is a relatively mild assumption. 
\end{rmk}

\subsection*{Proof of Theorem \ref{str_thm_sep}}
    As $S$ is a differentially finitely generated $R$-algebra, there is a surjective differential $R$-algebra homomorphism $\phi: R\{Y_1,...,Y_n\} \to S$. Write $Y = (Y_1,...,Y_n)$, and let $\mathfrak{p} = \ker(\phi)$. As $S$ is a domain, and $\phi$ is injective on $R$, we have that $\mathfrak{p}$ is a differential prime ideal of $R\{Y\}$ with $\mathfrak{p} \cap R = 0$. Further, since $S$ is separable over $R$, the ideal $\mathfrak{p}$ is separable over $R$. Let $G$ be a characteristic set of $\mathfrak{p}$.

    By Lemma \ref{lem_coherent_autoreduced_finiteness}, there is a finite set $Y' \subseteq \DY$, whose members are each partially reduced with respect to $G$, such that $G$ is $S\mathfrak{p}_1$-coherent, and $\mathfrak{p} = ([G] + S\mathfrak{p}_1) : H(G)^\infty$, where $\mathfrak{p}_1 = \mathfrak{p} \cap R[Y']$.

    Define the following:
    \begin{align}
        h &\coloneqq \phi(H(G)), \\
        V &\coloneqq \{y \in \DY : y \text{ is not a proper derivative of any } u_g \in G\}, \\
        V_B &\coloneqq Y' \cup \{y \in V : y \text{ appears in some } g \in G\}, \\
        B &\coloneqq \phi(R[V_B]), \\
        P &\coloneqq \phi(R[V \setminus V_B]).
    \end{align}

    \begin{rmk}
        Observe that $Y'$ is a subset of $V$: if a differential indeterminate is a proper derivative of some $u_g$, then it is not partially reduced with respect to $G$ and thus does not lie in $Y'$.
    \end{rmk}
    \begin{rmk}
        Since $G$ is autoreduced, $f \in R\{Y\}$ is partially reduced with respect to $G$ if and only if $f \in R[V]$.
    \end{rmk}

    \begin{claim_num}\label{claim_restriction_not_vb_injective}
        The restriction of $\phi$ to $P' = R[V \setminus V_B]$ is injective.
    \end{claim_num}
    \begin{proof}
        Suppose we have $f \in P' \cap \p$. We claim that $f$ is reduced with respect to $G$. By the previous observation, we have that $f$ is partially reduced with respect to $G$. Since the leader $u_g$ of each $g \in G$ is in $V_B$, no leader of any $g \in G$ appears in $f$, and thus $f$ is reduced with respect to $G$. By Lemma \ref{lem_char_set_not_reduced}, $f \in R$, and since $\p \cap R = 0$, $f = 0$, as required.
    \end{proof}

    \begin{claim_num}
        $h \neq 0$.
    \end{claim_num}
    \begin{proof}
        By the remark following Lemma \ref{lem_coherent_autoreduced_finiteness} and writing $G = \{g_0,...,g_{l-1}\}$, we observe that
        \[
        H(G) = \prod_{i < l} I(g_i)\cdot S(g_i)
        \]
        does not lie in the prime ideal $\p$, as no $I(g_i)$ nor $S(g_i)$ lies in $\p$. Thus $h = \phi(H(G))$ is not $0$ in $S$.
    \end{proof}

    \begin{claim_num}
         $S_h = (B \cdot P)_h$.
    \end{claim_num}
    \begin{proof}
        Let $f \in R\{Y\}$. By Proposition \ref{prop_kolchin_division}, there is $\tilde{f} \in R\{Y\}$ reduced with respect to $G$, and $H \in H_G$ such that $H \cdot f \equiv \tilde{f} \mod{[G]}$. Since $G \subseteq \p$, we have that $[G] \subseteq \p$, thus $\phi(f) \cdot \phi(H) = \phi(\tilde{f})$ in $S$.
        
        Since $\tilde{f}$ is reduced with respect to $G$, we have that $\tilde{f} \in R[V]$, and in particular $\phi(\tilde{f}) \in B \cdot P$.
        Further, for each $g \in G$, $\phi(I(g))$ and $\phi(S(g))$ are units in $(B \cdot P)_h$, so $\phi(H)$ is a unit also. Thus, $\phi(f) = \phi(H)^{-1} \phi(\tilde{f}) \in (B \cdot P)_h$, as required. The reverse inclusion is trivial.
    \end{proof}

    \begin{claim_num}
        $S_h$ is a differentially finitely presented $R$-algebra.
    \end{claim_num}
    \begin{proof}
        We extend the projection $\phi: R\{Y\} \to S$ to $\psi: R\{Y\}[H(G)^{-1}] \to S_h$ by defining $\psi(H(G)^{-1}) = h^{-1}$. Thus $S_h$ is differentially finitely generated.

        Let $\mathfrak{q} = \ker(\psi)$. We show that $\mathfrak{q}$ is finitely generated as a differential ideal. As $R$ is Noetherian, and $R[Y']$ is a finitely generated polynomial algebra over $R$, it is also Noetherian, and thus $\p_1$ is finitely generated as an ideal. Let $A$ be such an generating set.
        Clearly, since $\psi$ extends $\phi$, $G \cup A \subseteq \mathfrak{q}$. We claim that $G \cup A$ generates $\mathfrak{q}$ as a differential ideal. Since $\psi$ is an extension of $\phi$, we have that $\mathfrak{p} \subset \mathfrak{q}$, therefore $[G \cup A] \subseteq \mathfrak{q}$ holds.
        
        For the reverse inclusion, suppose that $f/H(G)^d \in \mathfrak{q}$ for some $f \in R\{Y\}$ and $d \in \N$.
        Since $S_h$ is a domain, and $h^{-d}$ is nonzero, we have that $\psi(f) = \phi(f) = 0$, i.e. $f \in \p$. Since $\p = ([G] + S\p_1) : H(G)^\infty$, there is some $n \in \N$ with $fH(G)^n \in [G] + S\p_1 \subseteq [G \cup A]$. Thus, we have that $f$ lies in the differential ideal generated by $G \cup A$ in $R\{Y\}[H(G)^{-1}]$.
    \end{proof}

    \begin{claim_num}
        $B$ is finitely generated, and $B_h$ is finitely presented as $R$-algebras.
    \end{claim_num}
    \begin{proof}
        This is clear, as $B$ is a quotient of a finitely generated polynomial algebra over a Noetherian ring.
    \end{proof}

    The above claims prove parts (a), (b) and (c) of Theorem \ref{str_thm_sep}. It remains to prove (d).

    \begin{claim_num}\label{claim_linear_dependence}
        Suppose $b_1,...,b_m \in B$ are linearly dependent over $P$. Then, they are linearly dependent over $R$.
    \end{claim_num}
    \begin{proof}
        For each $i$, let $f_i \in R[V_B]$ with $\phi(f_i) = b_i$, and suppose that there are $p_i \in R[V \setminus V_B]$, not all in $\p$, such that $q \coloneqq \sum_i f_i p_i \in \p$. We may assume in particular that $p_1 \not\in \p$. Since $q \in R[V]$, $q$ is partially reduced with respect to $G$.

        Let $A$ again be a finite generating set of $\p_1$ as an ideal of $R[Y']$. Then, by Corollary \ref{cor_coherent_partially_reduced}, there are $n \in \N$, and $h_g \in R\{Y\}$ for each $g \in G \cup A$ such that
        \[
        H(G)^n q = \sum_{g \in G \cup A} h_g g.
        \]
        Since $H(G)$, $q$ and every $g$ lies in $R[V]$, we may assume the coefficients $h_g$ also lie in $R[V]$. 

        Since $p_1 \neq 0$, there exists an $R$-algebra homomorphism $\psi: R[V \setminus V_B] \to R$ such that $\psi(p_1) \neq 0$. This can be extended to an $R[V_B]$-algebra homomorphism $\psi: R[V] \to R[V_B]$ with $\psi(p_1) \neq 0$.

        Since all $p_i$ and $h_g$ lie in $R[V]$, we may now apply $\psi$ to the following equation:
        \[
        H(G)^n( f_1 p_1 + ... + f_k p_k) = \sum_{g \in G\cup A} h_g g.
        \]
        Since $H(G)$, every $f_i$ and $g \in G \cup A$ lies in $R[V_B]$, these are preserved by $\psi$, and thus we obtain that
        \[
        H(G)^n(\psi(p_1) f_1 + ... + \psi(p_k) f_k) = \sum_{g\in G\cup A} \psi(h_g) g.
        \]
        Observe that the right hand side lies in the ideal $([G] \cup S\p_1)$, and thus
        \[
        \psi(p_1) f_1 + ... + \psi(p_k) f_k \in ([G] \cup S\p_1) : H(G)^\infty = \p.
        \]
        Recall that $\psi(p_i) \in R$, and $\phi$ is an $R$-algebra homomorphism. Applying $\phi$ to the above yields:
        \[
        \psi(p_1) b_1 + ... + \psi(p_k) b_k = 0.
        \]
        By previous assumption, we have that $\psi(p_1)$ is not zero, and thus the $b_i$ are linearly dependent over $R$.
    \end{proof}
    
    This implies (d) as follows:
    \begin{claim_num}
        The homomorphism $B \otimes_R P \to B\cdot P$ induced by multiplication is an isomorphism of $R$-algebras.
    \end{claim_num}
    \begin{proof}
        The homomorphism is clearly surjective, and it remains to show injectivity. Let $m \in \N$ be minimal such that there are $b_1,...,b_m \in B$ and $p_1,...,p_m \in P$ with $\sum_i b_i p_i = 0$ and $x \coloneqq \sum_i b_i \otimes p_i \neq 0$. Then, the $b_i$ are linearly dependent over $P$. By Claim \ref{claim_linear_dependence}, the $b_i$ are in fact linearly dependent over $R$. Thus, there are $r_1,...,r_m \in R$, not all zero, such that $r_1b_1 + ... + r_mb_m = 0$.

        Without loss, assume that $r_1 \neq 0$. Then, $m > 1$, and observe that
        \[
        r_1b_1 \otimes p_1 = -(r_2b_2 + ... + r_mb_m) \otimes p_1,
        \]
        which gives
        \[
        r_1x = -(r_2b_2 + ... + r_mb_m) \otimes p_1 + r_1b_2 \otimes p_2 + ... + r_1b_m \otimes p_m.
        \]
        Collecting terms and rearranging, we have
        \[
        r_1x = b_2 \otimes (r_1p_2 - r_2p_1) + ... + b_m \otimes (r_1p_m - r_mp_1).
        \]
        By assumption, the image of $x$ under the homomorphism induced by multiplication is $0$, hence the image of $r_1x$ is also $0$. Thus, we necessarily have that $r_1x = 0$, otherwise this contradicts the minimality of the choice of $m$. Let $F$ denote the quotient field of $R$. Then, in $F \otimes_R (B \otimes_R P)$, we have that
        \[
        1 \otimes x = \frac{1}{r_1} \otimes r_1x = 0.
        \]
        By Claim \ref{claim_restriction_not_vb_injective}, $P$ is a polynomial $R$-algebra, hence flat. As the inclusion $B \to F \otimes_R B$ is injective, it follows that $B \otimes_R P \to F \otimes_R B \otimes_R P$ is injective. Thus, $1 \otimes x = 0$ in $F \otimes_R B \otimes_R P$ implies that $x = 0$, which is a contradiction.
    \end{proof}
    
\bibliography{references1}
\bibliographystyle{halpha}

\end{document}